\documentclass[journal]{IEEEtran}

\usepackage{epsf}
\usepackage{subfigure}
\usepackage{bbm}
\usepackage{dsfont}
\usepackage{algorithm}
\usepackage{algorithmic}
\usepackage{textcomp }
\usepackage{amsthm}
\usepackage{amsmath,amssymb}
\usepackage{graphicx}
\usepackage{lipsum}
\usepackage{caption}
\usepackage{stackrel}
\usepackage{subfigure}
\usepackage{amsmath,amssymb}
\usepackage{graphicx}
\usepackage{epstopdf}
\usepackage{color}
\usepackage{soul}
\PassOptionsToPackage{round}{natbib}
\usepackage{hyperref}

\usepackage{cite}
\usepackage{multirow,tabularx}
\usepackage{ifthen}
\usepackage{times}
\usepackage{array,url,kantlipsum}
\usepackage{amsmath}

\usepackage{multicol}
\usepackage[english]{babel}
\usepackage{blindtext}

\usepackage{lipsum, babel}
\DeclareMathAlphabet{\mathpzc}{OT1}{pzc}{m}{it}
\usepackage{amsmath}
\usepackage{amsfonts}
\usepackage{amssymb}
\usepackage{amsxtra}
\usepackage{amsthm}
\usepackage{latexsym}

\makeatletter
\newcommand{\removelatexerror}{\let\@latex@error\@gobble}
\makeatother
\usepackage{lipsum}
\pagestyle{empty}

\floatstyle{plain}
\newfloat{twocolequfloat}{b}{zzz}
\floatname{twocolequfloat}{Equation}

\newtheorem{lem}{Lemma}

\newtheorem{theorem}{Theorem}
\newtheorem*{theorem*}{Theorem}

\usepackage{mathtools}

\begin{document}

\title{A Lower Bound To The Expected Discovery Time In One-Way Random Neighbor Discovery
 \thanks{The authors are at the Ming Hsieh Department of Electrical Engineering, University of Southern California, Los Angeles, CA 90089, USA. (e-mail: \{burghal,saberteh,molisch\}@usc.edu)}}

\author{
Daoud Burghal {\it Student Member, IEEE,} Arash Saber Tehrani,  \\ Andreas F. Molisch, { \it Fellow, IEEE}  }

\maketitle

\begin{abstract}
In this work we provide a lower bound on the expected discovery time in a one-way neighbor discovery, in particular we show that the average time that a node $i$ takes to discover all of its neighbors is lower bounded by the reciprocal of the average probabilities of successful discovery.
\end{abstract} 
\section{Introduction} 
In random one-way neighbor discovery scheme, nodes announce their identity (ID) at each time instant $t$ with given probabilities. In multi antenna systems, nodes need to announce their ID in all directions. For a general case where transmission probabilities are different  for different node and different direction, the probability that nodes discover one another might not be equal. Under this setting, we provide a bound for the expected time that a node, $i$, takes to discover its neighbors.
\section{The Lower Bound}
As provided in \cite{8086217}, based on \cite{sheldon2002first} and \cite{flajolet1992}, the expected time that node $i$ needs to discover all its neighbors is given by 
 \begin{align} \label{eq:NDET}
 \bar{T}_i = & \sum_{j\in \mathcal{N}_{i}}\frac{1}{P_{j,i}} - \sum_{k,j \in \mathcal{N}_{i}, k \neq j} \frac{1}{P_{j,i}+ P_{k,i}} +... \\ \nonumber & +  (-1)^{N_i+1}  \frac{1}{ \sum_{j \in \mathcal{N}_{i}} P_{j,i}}
 \end{align}
 where $\mathcal{N}_{i}$ is the set of neighbors of node $i$, $N_i$ is the number of neighbors, i.e., $N_i=|\mathcal{N}_{i}|$ , $P_{j,i}$ is the probability that node $i$ discovers node $j$. Defining ${\rm H}_{N_i}$ as the harmonic number of $N_i$, then Theorem 3 in \cite{8086217} states:
 \begin{theorem} \label{Th:NDLowrBoundM}  The expected time that node $i$ takes to discover all its $N_i$ neighbors, $\bar{T}_i$, in an arbitrary network is lower bounded as follows:
$$\bar{T}_i \geq \frac{{\rm H}_{N_i}}{\bar{P}_i },$$
where $\bar{P}_i \triangleq \frac{1}{N_i} \sum_{j \in \mathcal{N}_i } P_{j,i}$.
\end{theorem}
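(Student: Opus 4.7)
My plan is to convert the alternating inclusion-exclusion sum (\ref{eq:NDET}) into a single nonnegative integral and then apply two classical inequalities pointwise inside the integrand. The starting point is the identity $1/a = \int_0^\infty e^{-at}\,dt$ for $a > 0$. Applying this to each term $1/(\sum_{j\in S} P_{j,i})$ appearing in (\ref{eq:NDET}) and recognizing the resulting signed sum as inclusion-exclusion applied to $1 - \prod_{j\in\mathcal{N}_i}(1 - x_j)$ with $x_j = e^{-P_{j,i}t}$, I would rewrite
\[
\bar{T}_i \;=\; \int_0^\infty \Bigl[\, 1 - \prod_{j\in\mathcal{N}_i}\bigl(1 - e^{-P_{j,i}t}\bigr) \Bigr]\,dt.
\]
The exchange of the finite sum over subsets $S\subseteq\mathcal{N}_i$ with the integral requires no justification beyond linearity. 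This is also the familiar \emph{Poissonized} form of the coupon-collector expectation.

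Next, I would establish the pointwise bound $\prod_{j\in\mathcal{N}_i}(1 - e^{-P_{j,i}t}) \le (1 - e^{-\bar{P}_i t})^{N_i}$ for every $t\ge 0$. By AM-GM on the $N_i$ nonnegative factors,
\[
\prod_{j\in\mathcal{N}_i}\!\bigl(1 - e^{-P_{j,i}t}\bigr) \;\le\; \Bigl[\,1 - \tfrac{1}{N_i}\!\sum_{j\in\mathcal{N}_i} e^{-P_{j,i}t}\,\Bigr]^{N_i},
\]
while Jensen's inequality applied to the convex map $x\mapsto e^{-x}$ gives $\tfrac{1}{N_i}\sum_j e^{-P_{j,i}t} \ge e^{-\bar{P}_i t}$, where $\bar{P}_i=\tfrac{1}{N_i}\sum_j P_{j,i}$. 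Composing the two yields the claimed inequality. Subtracting from $1$ reverses the direction, and integrating in $t$ gives
\[
\bar{T}_i \;\ge\; \int_0^\infty \Bigl[\,1 - \bigl(1 - e^{-\bar{P}_i t}\bigr)^{N_i}\,\Bigr]\,dt.
\]

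Finally, I would evaluate the right-hand integral: expanding $(1-e^{-\bar{P}_i t})^{N_i}$ by the binomial theorem and integrating term by term gives $\sum_{k=1}^{N_i}(-1)^{k+1}\binom{N_i}{k}\tfrac{1}{k\bar{P}_i} = H_{N_i}/\bar{P}_i$, which is equivalent to the well-known expected maximum of $N_i$ i.i.d.\ exponential variables with rate $\bar{P}_i$. The non-routine step is spotting the integral representation in the first paragraph; once that rewriting is in hand, AM-GM and Jensen combine cleanly and no delicate analysis of the alternating cancellations in (\ref{eq:NDET}) is needed. As a sanity check, equality in both AM-GM and Jensen holds precisely when all $P_{j,i}$ coincide, matching the well-known fact that among probability vectors with a fixed mean, the coupon-collector time is minimized at the uniform vector.
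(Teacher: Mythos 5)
Your proof is correct, but it takes a genuinely different route from the paper. The paper proceeds in two stages: it first shows (Theorem \ref{Th:NDCombineF}) that replacing any \emph{pair} $P_{j,i},P_{k,i}$ by their common average can only decrease $\bar{T}_i$ --- this is done by isolating the terms of (\ref{eq:NDET}) containing $P_{j,i}$ but not $P_{k,i}$ into a function $\mathfrak{F}^i_{j|k}$, proving its convexity via an inverse-Laplace-transform argument combined with the inclusion--exclusion bound, and applying Jensen to the pair --- and then shows (Theorem \ref{Th:NDAvgOfTwo}) that iterated pairwise averaging converges to the uniform vector $\bar{\mathbf{p}}$ using doubly stochastic matrices and Markov-chain theory. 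You instead pass directly to the integral representation $\bar{T}_i=\int_0^\infty\bigl[1-\prod_{j}(1-e^{-P_{j,i}t})\bigr]\,dt$ (valid since $P_{j,i}>0$ and the sum over subsets is finite), apply AM--GM and Jensen pointwise in $t$, and evaluate $\int_0^\infty\bigl[1-(1-e^{-\bar{P}_it})^{N_i}\bigr]\,dt={\rm H}_{N_i}/\bar{P}_i$; all three steps check out. Your argument is shorter and entirely elementary: it bypasses the convergence-of-averaging machinery and yields the equality condition ($P_{j,i}$ all equal) as an immediate by-product. What the paper's route buys in exchange is the stronger intermediate monotonicity statement --- a Schur-convexity-flavored result that each single averaging step already lowers the expected discovery time --- which is of independent interest. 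It is worth noting that the two proofs share the same analytic kernel: the paper's convexity lemma also rewrites reciprocals as Laplace integrals and invokes the inclusion--exclusion probability bound, which is precisely the identity $1-\prod_j(1-x_j)=\sum_{\emptyset\neq S}(-1)^{|S|+1}\prod_{j\in S}x_j$ underlying your first display.
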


\label{App:NDAppendixA}
\begin{proof}
The proof has two parts; in the first part we show that replacing any two probabilities in (\ref{eq:NDET}) with their average results in a lower bound of the original expected discovery time. In the second part, we show that iteratively substituting two probabilities with their average converges to the right hand side of Theorem \ref{Th:NDLowrBoundM}.

Let $\mathcal{T}^{(j,k)}_i$ denote the discovery time of node $i$, when two probabilities of success, say $P_{j,i}$ and $P_{k,i}$, are substituted with their arithmetic averages, i.e.,
\begin{align} \label{eq:NDTwoAvg}
P_{j,i} \leftarrow \frac{1}{2}(P_{j,i}+P_{k,i}) {\rm ~ and~} P_{k,i} \leftarrow \frac{1}{2}(P_{j,i}+P_{k,i})
\end{align}
The following theorem shows the impact of such modification.
\begin{theorem}\label{Th:NDCombineF} Replacing any two probabilities of success $P_{j,i}$ and $P_{k,i}$ with their averages results in a lower bound of the original expected discovery time, i.e., we have:
$$\mathbb{E}\big\{\mathcal{T}_i\big\} \geq \mathbb{E}\Big\{\mathcal{T}^{(j,k)}_i\Big\}. $$
\end{theorem}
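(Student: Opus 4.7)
My strategy is to rewrite the alternating-sum formula in (\ref{eq:NDET}) as a single integral, after which the pairwise-averaging statement reduces to a pointwise inequality amenable to a convexity argument. First I would use the identity $\frac{1}{\sum_{\ell \in S} P_{\ell,i}} = \int_0^\infty e^{-t \sum_{\ell \in S} P_{\ell,i}}\,dt$ on each summand of (\ref{eq:NDET}), swap sum and integral, and invoke the combinatorial identity $\sum_{\emptyset \neq S \subseteq \mathcal{N}_i}(-1)^{|S|+1}\prod_{\ell \in S} x_\ell = 1 - \prod_{\ell \in \mathcal{N}_i}(1 - x_\ell)$ with $x_\ell = e^{-tP_{\ell,i}}$. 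This would yield
$$\bar{T}_i = \int_0^\infty \left[1 - \prod_{\ell \in \mathcal{N}_i}(1 - e^{-tP_{\ell,i}})\right] dt,$$
together with an analogous expression for $\mathbb{E}\{\mathcal{T}_i^{(j,k)}\}$ in which $P_{j,i}$ and $P_{k,i}$ are both replaced by $P \triangleq (P_{j,i}+P_{k,i})/2$.

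With this representation in hand, the next step is to isolate the factors that change under the substitution. Setting $Q(t) = \prod_{\ell \in \mathcal{N}_i,\, \ell \neq j,k}(1 - e^{-tP_{\ell,i}}) \geq 0$, the difference $\bar{T}_i - \mathbb{E}\{\mathcal{T}_i^{(j,k)}\}$ collapses to
$$\int_0^\infty \left[(1 - e^{-tP})^2 - (1 - e^{-tP_{j,i}})(1 - e^{-tP_{k,i}})\right] Q(t)\,dt,$$
so the theorem follows once I establish the pointwise bound $(1 - e^{-tP})^2 \geq (1 - e^{-tP_{j,i}})(1 - e^{-tP_{k,i}})$ for every $t \geq 0$. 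To verify this, I would take logarithms and check that $p \mapsto \ln(1 - e^{-tp})$ is concave on $(0,\infty)$: a direct computation gives a second derivative equal to $-t^2 e^{tp}/(e^{tp}-1)^2 \leq 0$, and concavity evaluated at the midpoint of $P_{j,i}$ and $P_{k,i}$ delivers exactly the required inequality after exponentiating.

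The main obstacle is really just finding the integral representation; once the alternating signs in (\ref{eq:NDET}) have been collapsed into a single product, the rest is routine. A direct algebraic attack on (\ref{eq:NDET}), in which one groups its $2^{N_i}-1$ summands according to whether the underlying subset $S$ contains $\{j\}$, $\{k\}$, both, or neither, leads to per-$S$ differences whose sign depends on $(-1)^{|S|+1}$ and therefore does not obviously telescope; the integral form bypasses this difficulty because the nonnegativity of $Q(t)$ replaces the delicate sign-tracking with one sign-preserving pointwise concavity inequality.
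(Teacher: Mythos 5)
Your proof is correct, and it takes a genuinely different route from the paper's. The paper splits the alternating sum (\ref{eq:NDET}) into terms containing neither $P_{j,i}$ nor $P_{k,i}$, terms containing both (which depend only on the sum $P_{j,i}+P_{k,i}$ and are therefore invariant under the averaging), and a single-variable function $\mathfrak{F}^i_{j|k}(x)$ collecting the terms containing exactly one of them; it then proves $\mathfrak{F}^i_{j|k}$ is convex by a second-derivative test, establishing nonnegativity of $\mathfrak{F}^{i''}_{j|k}$ through an inverse Laplace transform and an inclusion--exclusion bound, and finishes with midpoint convexity. You instead collapse the entire alternating sum at once into the survival-function integral $\bar{T}_i=\int_0^\infty\bigl[1-\prod_{\ell}(1-e^{-tP_{\ell,i}})\bigr]dt$ (the exchange of the finite sum with the integral and the convergence of each piece are unproblematic since all $P_{\ell,i}>0$), so that the invariance of the "both" and "neither" terms is automatic — it is absorbed into the nonnegative factor $Q(t)$ — and the only thing left to check is the pointwise inequality $(1-e^{-tP})^2\geq(1-e^{-tP_{j,i}})(1-e^{-tP_{k,i}})$, which you correctly obtain from concavity of $p\mapsto\ln(1-e^{-tp})$. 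The two arguments are close cousins (the paper's Laplace-transform step is essentially your exponential representation applied to the second derivative of one block of terms), but yours trades the paper's additive convexity of $\mathfrak{F}^i_{j|k}$ for multiplicative (log-)concavity of $1-e^{-tp}$, which eliminates the four-way bookkeeping and the separate symmetry observation $\mathfrak{F}^i_{j|k}=\mathfrak{F}^i_{k|j}$; the paper's decomposition, on the other hand, makes explicit the structural fact that the cross terms depend only on $P_{j,i}+P_{k,i}$, which is conceptually useful for seeing why pairwise averaging is the right operation to iterate in Theorem \ref{Th:NDAvgOfTwo}.
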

For clarity of presentation we provide the proof in subsection\ref{App:NDAppendixE2}. As indicated above, the formula for $\mathbb{E}\{\mathcal{T}^{(j,k)}_i\}$ is similar to (\ref{eq:NDET}) with new updated probabilities. Consequently, performing the average iteratively will result in a lower bound on each step, i.e., when $j,k,s,r,v,u \in \mathcal{N}_i$ we have,
\begin{align} \label{eq:NDProbSeq}
 \mathbb{E}\Big\{\mathcal{T}^{(j,k)}_i\Big\} \geq \mathbb{E}\Big\{\mathcal{T}^{(j,k)(s,r)}_i\Big\} \geq \mathbb{E}\Big\{\mathcal{T}^{(j,k)(s,r)(v,u)}_i\Big\} \geq \dots
\end{align}
where $\mathbb{E}\big\{\mathcal{T}^{(j,k)(s,r)}_i\big\}$ and $\mathbb{E}\big\{\mathcal{T}^{(j,k)(s,r)(v,u)}_i\big\} $ are the resultant discovery time after applying (\ref{eq:NDTwoAvg}) replacing the new $P_{s,i}$ and $P_{r,i}$ with their averages, and then substituting the resulting $P_{v,i}$ and $P_{u,i}$ with their averages, respectively. Next, we show that this process will ultimately converge, when we apply (\ref{eq:NDTwoAvg}) iteratively to different, possibly randomly chosen, probability pairs. Specifically, we have the following theorem:
\begin{theorem}\label{Th:NDAvgOfTwo}
 Let $\mathbf{p}$ be a vector of real numbers, i.e., $\mathbf{p}=[p_1,...,p_n]^{T}$, and let  $\bar{p} = \frac{1}{n}\sum_{j=1}^n p_j$. There exists an algorithm that iteratively replaces two elements at a time with their average until $\mathbf{p}$ converges to a vector $\bar{\mathbf{p}}$ with all elements equal to $\bar{p}$.
 \end{theorem}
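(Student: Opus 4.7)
The plan is to identify an invariant of the pair-averaging operation and a Lyapunov-style potential that strictly decreases whenever a non-trivial step is taken, then exhibit a specific pair-selection rule that forces this potential to zero at a uniform geometric rate.

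First, replacing a pair $(p_j,p_k)$ by $\bigl(\tfrac{p_j+p_k}{2},\tfrac{p_j+p_k}{2}\bigr)$ preserves $\sum_\ell p_\ell$, so the mean $\bar p$ is invariant throughout the procedure. Define the potential
\[
V(\mathbf{p}) \;=\; \sum_{\ell=1}^{n}\bigl(p_\ell-\bar p\bigr)^{2}.
\]
Writing $a=p_j-\bar p$ and $b=p_k-\bar p$, the contribution of indices $j,k$ to $V$ changes from $a^{2}+b^{2}$ to $2\bigl(\tfrac{a+b}{2}\bigr)^{2}=\tfrac{1}{2}(a+b)^{2}$, so a single averaging step on the pair $(j,k)$ decreases $V$ by exactly $\tfrac{1}{2}(p_j-p_k)^{2}\ge 0$, strictly whenever $p_j\neq p_k$. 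Thus $V$ is monotone non-increasing and bounded below by $0$, and the only question is whether it actually tends to $0$.

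The algorithm I would propose is: at each iteration, locate an index $j^{\star}$ with $p_{j^{\star}}=p_{\max}$ and an index $k^{\star}$ with $p_{k^{\star}}=p_{\min}$, apply the averaging step to $(j^{\star},k^{\star})$, and terminate once $p_{\max}=p_{\min}$ (in which case all entries are already equal to $\bar p$). Let $D=p_{\max}-p_{\min}$. Since $\bar p\in[p_{\min},p_{\max}]$, every entry satisfies $|p_\ell-\bar p|\le D$, and therefore $V\le nD^{2}$. The chosen step reduces $V$ by at least $\tfrac{1}{2}D^{2}\ge \tfrac{V}{2n}$, which gives $V_{\text{new}}\le\bigl(1-\tfrac{1}{2n}\bigr)V$. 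Iterating drives $V\to 0$ geometrically, which forces $p_\ell\to\bar p$ for every $\ell$.

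The main obstacle I anticipate is that an arbitrary pair-selection rule would only guarantee a monotone, possibly non-convergent, decrease of $V$ (for example, repeatedly averaging two nearly-equal entries yields vanishing progress while leaving other entries untouched). Choosing the extremal max--min pair is what converts the per-step identity $\Delta V=-\tfrac{1}{2}(p_j-p_k)^{2}$ into a uniform multiplicative contraction via the crude variance bound $V\le n(p_{\max}-p_{\min})^{2}$. Everything else reduces to that one algebraic identity, so no further technical difficulty is expected.
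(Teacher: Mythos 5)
Your proof is correct, but it takes a genuinely different route from the paper's. The paper fixes an oblivious, data-independent schedule of pairings: it encodes the simultaneous averaging of adjacent pairs as doubly stochastic matrices ${\pmb \omega}_u$, forms $\mathbf{W} = \widetilde{\pmb\Omega}\pmb\Omega$, and invokes Markov-chain machinery (irreducibility, aperiodicity, uniqueness of the stationary distribution, and closure of doubly stochastic matrices under multiplication) to conclude $\mathbf{W}^u \rightarrow \frac{1}{n}\mathbf{U}_{n\times n}$ and hence $\mathbf{p}_u \rightarrow \bar{\mathbf{p}}$. You instead use an adaptive max--min pairing rule together with the Lyapunov potential $V(\mathbf{p})=\sum_{\ell}(p_\ell-\bar p)^2$, the exact per-step identity $\Delta V = -\tfrac{1}{2}(p_j-p_k)^2$, and the crude bound $V \le n(p_{\max}-p_{\min})^2$ to obtain the explicit contraction $V_{\mathrm{new}} \le (1-\tfrac{1}{2n})V$. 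Your argument is more elementary and self-contained, and it yields an explicit geometric convergence rate that the paper does not state; the paper's argument, in exchange, shows that a fixed, non-adaptive round-robin schedule suffices and handles an entire sweep of pair averages in one matrix product. Both establish exactly what the theorem asks for, namely the existence of \emph{some} pair-averaging algorithm converging to $\bar{\mathbf{p}}$. One small caution: your termination clause (``stop once $p_{\max}=p_{\min}$'') will in general never fire after finitely many steps (try $n=3$ with $\mathbf{p}=[0,0,3]^T$), but since the theorem only requires convergence in the limit, the geometric decay of $V$ is all that is needed and your proof does not actually rely on finite termination.
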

In subsection\ref{App:NDAppendixE3} we use the properties of doubly stochastic and Markov matrices to show that for large $k$, $\mathbf{W}^k \mathbf{p}\rightarrow \bar{\mathbf{p}}$ is one such algorithm, where $\mathbf{W}$ is a matrix that represents a sequence of double averaging and $k$ is the number of repetitions of such procedure. Theorem \ref{Th:NDLowrBoundM} is a direct consequence of Theorem \ref{Th:NDCombineF} and Theorem \ref{Th:NDAvgOfTwo} .
\end{proof}

\subsection{Proof of Theorem \ref{Th:NDCombineF}}\label{App:NDAppendixE2}
\begin{proof}
We start by defining the following terms and functions. Let $\mathfrak{I}^i_{j,k}$ be the constant equal to the sum of the terms in (\ref{eq:NDET}) that are independent of $P_{j,i}$ and $P_{k,i}$, an example of such terms is $\frac{1}{P_{s,i}+P_{r,i}}$, where $s,r,j,k \in \mathcal{N}_{i}$. Let $\mathfrak{C}^i_{k,j}$ be the constant equal to the sum of the terms that include both of $P_{k,i}$ and $P_{j,i}$, an example of such a term is $\frac{1}{P_{j,i}+P_{k,i}}$. Finally, let $\mathfrak{F}^{i}_{j|k}(x)$ be
\begin{align} \label{eq:NDETfunc}
\mathfrak{F}^i_{j|k}(x) &=   \frac{1}{x} -  \sum_{r \in \mathcal{N}_i \setminus \{ j, k\}} \frac{1}{x+P_{r,i}} -... \\ & \nonumber+  (-1)^{(N_i)} \frac{1}{x + \sum_{r \in \mathcal{N}_i \setminus \{ j, k\}} P_{r,i}}.
\end{align}
The function $\mathfrak{F}^{i}_{j|k}(x)$ contains all the terms of (\ref{eq:NDET}) that include $P_{j,i}$ and \emph{not} $P_{k,i}$, with $P_{j,i}$ represented by $x$. From symmetry considerations, it is easy to verify that
$\mathfrak{F}^{i}_{j|k}(x) = \mathfrak{F}^{i}_{k|j}(x)$. 
Thus, we can rewrite (\ref{eq:NDET}) as:
\begin{align}\label{eq:rewriteET}
\bar{T}_i = \mathfrak{F}^i_{j|k}(P_{j,i}) + \mathfrak{F}^i_{k|j}(P_{k,i}) + \mathfrak{C}^i_{k,j} + \mathfrak{I}^i_{j,k}
\end{align}
Next, we show that the function $\mathfrak{F}^i_{j|k}(x)$ is convex function in $x$. 
\begin{lem}\label{Lm:NDFconvex}
For a given finite set $\mathcal{N}_i$ and values $P_{j,i} \in (0,1]$ $\forall j \in \mathcal{N}_i$ the function $\mathfrak{F}^i_{j|k}(x)$ is convex.
\end{lem}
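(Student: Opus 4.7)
The plan is to recognize $\mathfrak{F}^i_{j|k}(x)$ as a Laplace transform of a manifestly nonnegative function, from which convexity follows immediately. Writing $\mathcal{S}=\mathcal{N}_i\setminus\{j,k\}$ and $P_A=\sum_{r\in A}P_{r,i}$, the inclusion--exclusion structure of (\ref{eq:NDETfunc}) gives
$$\mathfrak{F}^i_{j|k}(x)=\sum_{A\subseteq \mathcal{S}}(-1)^{|A|}\,\frac{1}{x+P_A}.$$
The key identity I would exploit is $\frac{1}{x+c}=\int_0^{\infty}e^{-(x+c)t}\,dt$, valid for $x+c>0$; this is safe because $x=P_{j,i}>0$ and $P_A\ge 0$.

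Substituting this into each term and swapping the (finite) sum with the integral would yield
$$\mathfrak{F}^i_{j|k}(x)=\int_0^{\infty}e^{-xt}\sum_{A\subseteq \mathcal{S}}(-1)^{|A|}e^{-tP_A}\,dt=\int_0^{\infty}e^{-xt}\prod_{r\in\mathcal{S}}\bigl(1-e^{-tP_{r,i}}\bigr)\,dt,$$
where the final equality uses the standard factorization $\sum_{A\subseteq \mathcal{S}}(-1)^{|A|}\prod_{r\in A}e^{-tP_{r,i}}=\prod_{r\in\mathcal{S}}(1-e^{-tP_{r,i}})$.

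From here convexity is essentially automatic. Each factor $1-e^{-tP_{r,i}}$ is nonnegative for $t\ge 0$ since $P_{r,i}>0$, so the kernel $g(t):=\prod_{r\in\mathcal{S}}(1-e^{-tP_{r,i}})\ge 0$. Differentiating twice under the integral sign (the differentiation is justified by $g(t)=O(t^{|\mathcal{S}|})$ near $0$ and exponential decay $e^{-xt}$ at infinity) gives
$$\frac{d^{2}}{dx^{2}}\mathfrak{F}^i_{j|k}(x)=\int_0^{\infty}t^{2}e^{-xt}\,g(t)\,dt\ge 0,$$
so $\mathfrak{F}^i_{j|k}$ is convex on $(0,1]$. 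The degenerate case $\mathcal{S}=\emptyset$ (i.e.\ $N_i=2$) reduces to $\mathfrak{F}^i_{j|k}(x)=1/x$, which is convex directly.

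The only genuinely delicate point is the exchange of sum and integral and the subsequent differentiation under the integral; I expect this to be the main technical step, but it is routine because the sum is finite and $g(t)e^{-xt}$ together with $t^{2}g(t)e^{-xt}$ are integrable on $[0,\infty)$ thanks to the $t^{|\mathcal{S}|}$ vanishing at $0$ and the strict positivity of $x$ at infinity. An alternative, fully elementary route is to compute $\mathfrak{F}''(x)=\sum_{A\subseteq\mathcal{S}}(-1)^{|A|}\,2/(x+P_A)^{3}$ and apply the identity $\frac{1}{(x+c)^{3}}=\frac{1}{2}\int_0^{\infty}t^{2}e^{-(x+c)t}\,dt$ term by term, yielding the same nonnegative Laplace representation for $\mathfrak{F}''(x)$ without ever invoking convexity of the exponential.
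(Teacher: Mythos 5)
Your proposal is correct and follows essentially the same route as the paper: both arguments reduce the claim to writing $\mathfrak{F}^{i^{''}}_{j|k}(x)=\int_0^\infty t^2 e^{-xt}\,g(t)\,dt$ with a nonnegative kernel $g$, via the Laplace representation of $1/(x+c)^3$. The only cosmetic difference is that you establish nonnegativity of the alternating exponential sum by the direct factorization $\sum_{A\subseteq\mathcal{S}}(-1)^{|A|}e^{-t\sum_{r\in A}P_{r,i}}=\prod_{r\in\mathcal{S}}\bigl(1-e^{-tP_{r,i}}\bigr)$, whereas the paper phrases the same fact as the inclusion--exclusion bound $\mathbb{P}(A_1\cup\dots\cup A_n)\le 1$ for independent events; the two are mathematically identical, though your version is slightly more self-contained.
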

\begin{proof}
We use the second derivative test. Thus, we need to show that the second derivative of $\mathfrak{F}^i_{j|k}(x)$ is non negative, i.e., $$\mathfrak{F}^{i^{''}}_{j|k} (x) \geq 0.$$
where 
\begin{align}\label{eq:NDtheDeriv}
\mathfrak{F}^{i^{''}}_{j|k}(x) &= \frac{2}{x^3} -  \sum_{r \in \mathcal{N}_i \setminus \{k,j\}} \frac{2}{(x+P_{r,i})^3} -...\\  & \nonumber +  (-1)^{N_i} \frac{2}{(x+\sum_{r \in \mathcal{N}_i \setminus \{j,k\}}P_{r,i})^3}
\end{align}
To show that this is non-negative over the range of $x \in [0,1]$, we utilize the similarity between each term in (\ref{eq:NDtheDeriv}) and some properties of Unilateral Laplace Transform \cite{Oppenheim:1996}. Let the Laplace transform of a function $\mathfrak{g}(x)$ be given by:
\begin{align*}
\mathfrak{G}(s) = \int_{0}^{\infty} \mathfrak{g}(t) e^{-ts} {\rm dt}
\end{align*}
As is well-known, for $a \in [0,1]$ and region of convergence $s > -a$, $\mathcal{L} \{ t^2 e^{-at}\} = \frac{1}{(s+a)^3}$. 
Given the linearity of the Laplace transform, by replacing $s$ with $x$, and $a$ with appropriate values in each term in (\ref{eq:NDtheDeriv}), we find
\begin{align*}
\mathcal{L}^{-1}\{\mathfrak{F}^{i^{''}}_{j|k}(x)\} = t^2 e^{-x t} ~Z_{jk}
\end{align*} 
where 
\begin{align}\
Z_{jk}& =   1- \sum_{r\neq k, r \in\mathcal{N}_i} e^{- P_{r,i} t} +... \\ & \nonumber +  (-1)^{N_i} e^{- (\sum_{ r \in \mathcal{N}_i \setminus \{k,j\}} P_{r,i})t}
\end{align}
Clearly, to show that $\mathfrak{F}^{i^{''}}_{j|k}(x) \geq 0$, it is sufficient to show that $Z_{jk}$ is non negative.
\\ \\
\textbf{Claim :} $Z_{jk} \geq 0$.
\begin{proof}
Let us substitute $Z_{jk} = 1 - V_{jk} $ where
\begin{align} \label{eq:NDDefV}
V_{jk}  &= \sum_{ r \in \mathcal{N}_i \setminus \{k,j\}} v_r - \\ & \nonumber \sum_{ r\neq s, r,s  \in \mathcal{N}_i \setminus \{k,j\}} v_r \times v_s +...+ (-1)^{N_i-1} \prod_{r \in \mathcal{N}_i \setminus \{k,j\}} v_r,
\end{align}
where $v_r =e^{- P_{r,i} t} \in [0,1]$.
The reader might notice the similarity between (\ref{eq:NDDefV}) and the inclusion exclusion principle of $n=N_i-2$ independent events. To see this clearly, define the set of independent events $A_r$, $r = \{1,...,n\}$, each occurs with probability $v_r$. Then the probability of the union of these events is
\begin{align*}
\mathbb{P}(A_1 \cup A_2 \cup ... \cup A_n) &= \sum_{r}  \mathbb{P}(A_r) - \sum_{r\neq s} \mathbb{P}(A_r \cap A_s)  +... \\ & \nonumber +(-1)^{n+1}\mathbb{P}(A_1 \cup A_2 \cup ... \cup A_n) 
\\ \nonumber & = \sum_{r}  v_r - \sum_{r\neq s} v_r \times v_s +...\\ & \nonumber + (-1)^{n+1} \prod v_r
\end{align*}
By axioms of probability, the left hand side is $\leq 1$, i.e., $V_{jk}\leq 1$, and thus $Z_{jk} \geq 0$. 
\end{proof}
This concludes the proof of lemma \ref{Lm:NDFconvex}. 
\end{proof}
Since $\mathfrak{F^i}_{j|k}(x)$ is convex, we can write, \cite{boyd2004convex},
\begin{align} \label{eq:NDJensF}
\mathfrak{F}^i_{k|j}\Bigg( \frac{P_{j,i}+P_{k,i}}{2}\Bigg) & +  \mathfrak{F}^i_{j|k}\Bigg( \frac{P_{j,i}+P_{k,i}}{2}\Bigg) \\ & \nonumber =  2\mathfrak{F}^i_{k|j} \Bigg( \frac{P_{j,i}+P_{k,i}}{2}\Bigg) \\
&\leq \mathfrak{F}^i_{j|k}(P_{j,i}) + \mathfrak{F}^i_{k|j}(P_{k,i})
\end{align}
Then adding the terms $\mathfrak{I}^i_{j,k} +  \mathfrak{C}^i_{k,j}$ to the both sides of the inequality completes the proof.
\end{proof}

\subsection{Proof of Theorem \ref{Th:NDAvgOfTwo}} \label{App:NDAppendixE3}
  \begin{proof} Let $\mathbf{p} = [p_1, p_2 ..., p_n]^T$; we start with the case that $n$ is odd number. Then define ${\pmb \omega}_u$ an $n\times n$ matrix that has all one diagonal elements except a $2 \times 2$ block that starts at $(2u-1,2u-1)$ and equals to $\frac{1}{2} \mathbf{U}_{2\times2}$, where $\mathbf{U}_{u \times v}$ is all one matrix of size $u \times v$. We also define $\mathbf{O}_{u \times v}$ as all zero $u \times v$ matrix and $\mathbf{I}_{u \times u}$ as an identity matrix of size $u \times u$. For instance we have
\begin{align*}
{\pmb \omega}_2 = 
\begin{bmatrix}
\begin{matrix}
\begin{matrix}
\mathbf{I}_{2 \times 2} &  \mathbf{O}_{2\times 2} \\
\mathbf{O}_{2\times 2}   & \frac{1}{2} \mathbf{U}_{2\times 2} 
\end{matrix}  & {\bf O}_{4\times n-4} \\     
     \mathbf{O}_{n-4\times 4}    &  \mathbf{I}_{n-4\times n-4} \end{matrix}
\end{bmatrix}
\end{align*} 
 We also define $\widetilde{{\pmb \omega}}_u$, that is an all one diagonal matrix except the $2 \times 2$ block $\frac{1}{2}\mathbf{U}_{2 \times 2}$ that starts instead at element $(2u,2u)$.

Note that the process of replacing the first two elements of $\mathbf{p}$ with their average is equivalent to ${\pmb \omega}_1 \mathbf{p}$. Next define ${\bf \Omega} = \prod_{i}^{\frac{n-1}{2}} {\pmb \omega}_i$ and similarly we have $\bf \widetilde{\Omega} = \prod_{i}^{\frac{n-1}{2}}  \widetilde{{\pmb \omega}}_i$. It is easy to see that $ \mathbf{ \Omega p}$ replaces the pair of elements $(p_i,p_{i+1})$ for  $i=\{1,3,...,n-2\}$ of $\mathbf{p}$ with their averages, similarly $\mathbf{ \widetilde{\Omega} p}$ for  $(p_i,p_{i+1})$ $i=\{2,4,...,n-1\}$. Next, define:
$$ \mathbf{p}_{u} = (\mathbf{\widetilde{\Omega} \Omega)}^u \mathbf{p}$$
Then an algorithm described in Theorem \ref{Th:NDAvgOfTwo} can be the one in the following lemma:\\ 
\begin{lem}\label{lem:NDconvg}
 $\lim_{u \rightarrow \infty}  \mathbf{p}_{u} = \bar{\mathbf{p}}$
\end{lem}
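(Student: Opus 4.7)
The plan is to recognize $\mathbf{W} := \widetilde{\pmb\Omega}\,{\pmb\Omega}$ as the transition matrix of an ergodic, doubly stochastic Markov chain on $\{1,\dots,n\}$ and to invoke standard convergence results. First I would verify that each factor ${\pmb\omega}_u$ and $\widetilde{\pmb\omega}_u$ is doubly stochastic: it is nonnegative, and the only nontrivial block $\tfrac12 \mathbf{U}_{2\times 2}$ has both row sums and column sums equal to $1$. Since products of doubly stochastic matrices are doubly stochastic, so are ${\pmb\Omega}$, $\widetilde{\pmb\Omega}$, and $\mathbf{W}$. In particular $\mathbf{W}$ preserves the total $\sum_i p_i = n\bar p$, so $\bar{\mathbf p}$ is a fixed point of the iteration, and $\mathbf{1}$ is both a right and left eigenvector of $\mathbf{W}$ with eigenvalue $1$.

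Next I would argue that the Markov chain with transition matrix $\mathbf{W}$ is irreducible and aperiodic. For irreducibility, note that ${\pmb\Omega}$ couples the index pairs $(1,2),(3,4),\dots,(n-2,n-1)$, while $\widetilde{\pmb\Omega}$ couples $(2,3),(4,5),\dots,(n-1,n)$. The union of these couplings is the path graph on $\{1,\dots,n\}$, which is connected; hence after a bounded number of steps there is positive probability of traveling from any state to any other. For aperiodicity, each ${\pmb\omega}_u$ (and hence $\mathbf{W}$) has strictly positive diagonal entries, giving every state a self-loop.

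With irreducibility and aperiodicity established, the Perron--Frobenius theorem for stochastic matrices (equivalently, the fundamental convergence theorem for finite ergodic Markov chains) implies that $1$ is a simple eigenvalue of $\mathbf{W}$ and all other eigenvalues lie strictly inside the unit disk. Consequently $\mathbf{W}^{u}$ converges to the rank-one matrix $\mathbf{1}\,\boldsymbol{\pi}^{T}$, where $\boldsymbol{\pi}$ is the stationary distribution. Double stochasticity forces $\boldsymbol{\pi} = \tfrac{1}{n}\mathbf{1}$, so $\mathbf{W}^{u} \to \tfrac{1}{n}\mathbf{U}_{n\times n}$. Multiplying on the right by $\mathbf{p}$ gives
\[
\mathbf{p}_u \;=\; \mathbf{W}^{u}\mathbf{p} \;\longrightarrow\; \tfrac{1}{n}\mathbf{U}_{n\times n}\mathbf{p} \;=\; \bar p\,\mathbf{1} \;=\; \bar{\mathbf{p}},
\]
which is the desired limit.

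The only delicate point is the irreducibility/aperiodicity argument; concretely, one has to check that in $\mathbf{W}$ (or some small power $\mathbf{W}^{m}$) mass originating at index $i$ reaches every index $j$ with positive probability, which I would justify by following the chain of adjacent pair-averagings along the path. An even-$n$ construction is not covered by the definitions as written, but can be obtained by a symmetric choice of offset blocks (or by adding an auxiliary coordinate fixed at $\bar p$), and the same argument goes through unchanged.
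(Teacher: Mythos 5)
Your proposal is correct and follows essentially the same route as the paper: interpret $\mathbf{W}=\widetilde{\pmb\Omega}\,{\pmb\Omega}$ as a doubly stochastic transition matrix of an irreducible, aperiodic Markov chain, invoke the convergence theorem for ergodic chains, and use double stochasticity to force the uniform stationary distribution so that $\mathbf{W}^u\to\frac{1}{n}\mathbf{U}_{n\times n}$. Your path-graph justification of irreducibility is in fact a bit more explicit than the paper's one-line assertion, but the argument is the same.
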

\begin{proof}To prove the lemma, we first need to study the structure of the matrices $\bf \Omega$ and $\bf \widetilde{\Omega}$. By multiplying all ${\pmb \omega}_i$ we can see that ${\bf \Omega}$ is a block diagonal with each block equal to $\frac{1}{2} \mathbf{U}_{2\times 2}$ except the last $1 \times 1$ block is 1, i.e., we have 
\begin{align*}
{\bf \Omega} = 
\begin{bmatrix}
\begin{matrix}
    \frac{1}{2} \mathbf{U}_{2\times 2}  & \mathbf{O}_{2\times 2}            & \mathbf{O}_{2\times 2}   & \dots & \\ 
    \mathbf{O}_{2\times 2}              & \frac{1}{2} \mathbf{U}_{2\times 2} &  \mathbf{O}_{2\times 2} & \dots & \\
    \vdots & & \ddots & & \\
\end{matrix} &  \mathbf{O}_{n-1\times 1} \\
    \mathbf{O}_{1\times n-1} &  1
\end{bmatrix}
\end{align*}  
Similarly, $\bf \widetilde{\Omega}$ is a block diagonal, but the first $1\times 1$ block is equal to one, and the rest are $\frac{1}{2} \mathbf{U}_{2,2}$. Define $\mathbf{W} \triangleq \bf \widetilde{\Omega} \Omega$, then for $n\geq 5$ let $\mathbf{M}_{u}$ be an $n \times 2 $ matrix of all zero except an all one $4\times 2$ sub matrix, the first element of the sub matrix is at $(u+1,1)$, i.e., for instance 
\begin{align*}
\mathbf{M}_{1} =  \begin{bmatrix} \mathbf{O}_{1 \times 2}  \\ \mathbf{U}_{4 \times 2}   \\  \mathbf{O}_{n-5 \times 2}  \\ 
 \end{bmatrix}
\end{align*}
Then, it is easy to verify that $\mathbf{W}$ has the following form 
\begin{align*}
\mathbf{W} = 
\frac{1}{4} 
\begin{bmatrix}
\begin{matrix}
2\mathbf{U}_{1 \times 2} \\ \mathbf{U}_{2 \times 2} \\ \mathbf{O}_{n-3 \times 2}  \\  
\end{matrix}
 &    \mathbf{M}_{1}  & \mathbf{M}_{3}  & \hdots  & \mathbf{M}_{n-5} &    \begin{matrix}
\mathbf{O}_{n-2 \times 1}\\
 2\mathbf{U}_{2 \times 1} 
\end{matrix}   
 \end{bmatrix}
\end{align*}
 Note that every row and every column of $\mathbf{W}$ add to 1, this type of matrices are called doubly stochastic. To prove the lemma we utilize properties of such matrices. Remember that we need to show that 
$$ \mathbf{W}^{\infty} = \lim_{u\rightarrow \infty} \mathbf{W}^u = \frac{1}{n} \mathbf{U}_{n \times n}$$

Since $\mathbf{W}$ is a square matrix with non negative entries and every row adds to one, we can view it as a state transition matrix of a Markov Chain (MC) with $n$ states. The resultant MC is irreducible since any state $u$ can be accessible from any other state $v$. It is also aperiodic, since aperiodicity is a class property, it is enough to note that for this finite irreducible MC any state has a self transition, and thus period $d =1$. As in \cite{StochBookRoss}, the aforementioned properties indicate that there is a unique stationary distribution of the MC such that  
$$ \mathbf{W}^{\infty} = 
\begin{bmatrix}
w_1     &   w_2    &  \dots & w_n \\
w_1     &   w_2    &  \dots & w_n \\
\vdots  &   \vdots &        & \vdots \\
 w_1     &   w_2    &  \dots & w_n \\
 \end{bmatrix} $$
 Doubly stochastic matrices are closed under multiplication \cite{bhatia2013matrix}, i.e., $\mathbf{W}^u$ is a doubly stochastic matrix for any value $u$. Consequently, we have $ n w_i = 1 \implies w_i = \frac{1}{n}$, as a result we have 
 $$ \mathbf{W}^{\infty} = 
\frac{1}{n} \begin{bmatrix}
1     &   1    &  \dots & 1 \\
\vdots  &   \vdots &    \ddots    & \vdots \\
 1     &   1    &  \dots & 1 \\
 \end{bmatrix} = \frac{1}{n} \mathbf{U}_{n \times n}$$
Q.E.D.
\end{proof}
This also shows that averaging two elements at a time in the sequence indicated by Lemma \ref{lem:NDconvg} is one possible algorithm in Theorem \ref{Th:NDAvgOfTwo}. When $n$ is an even number, we need to slightly modify ${\pmb \omega}_u$ and $  \widetilde{{\pmb \omega}}_u$ such that we omit the last row and column in both matrices. The proof follows similar methodology, and is omitted for brevity.

\end{proof}
  
  \bibliographystyle{ieeetr}
\bibliography{RNDJournalPTh3}

\end{document}